\newtheorem{theorem}{Theorem}[section]
\newtheorem{lemma}{Lemma}[section]
\newtheorem{example}{Example}[section]
\begin{document}

\title[Virtual braids and cluster algebras]{Virtual braids and cluster algebras}
\author[A.~Egorov]{Andrey Egorov}
\address{Tomsk State University, 634050 Tomsk, Russia; Sobolev Institute of Mathematics, 630090 Novosibirsk, Russia; 
Novosibirsk State University, 630090 Novosibirsk, Russia}
\email{a.egorov2@g.nsu.ru}

\begin{abstract}
In 2015 Hikami and Inoue constructed a representation of the braid group $B_n$ in terms of cluster algebra associated with the decomposition of the complement of the corresponding knot into ideal hyperbolic tetrahedra. This representation leads to the calculation of the hyperbolic volume of the complement of the knot that is the closure of the corresponding braid. In this paper, based on the Hikami–Inoue representation discussed above, we construct a representation for the virtual braid group $VB_n$. We show that the so-called “forbidden relations” do not hold in the image of the resulting representation. In addition, based on the developed method, we construct representations for the flat braid group $FB_n$ and the flat virtual braid group $FVB_n$.
\end{abstract} 

\thanks{The author wish to thank Andrei Vesnin for helpful comments and discussions.} 
\thanks{The author was supported by the Tomsk State University Development Program (Priority-2030) and the article was prepared within the framework of the project "Mirror Laboratories" HSE University, RF}

\subjclass[2020]{57K12}	
\keywords{braid group, virtual braid group, cluster algebra}

\footnotesize

\maketitle


\section{Introduction} 

Let us start with recalling braid groups and related groups. For $n \geq 2$, the \emph{braid group} $B_n$ is defined as a group with generators $\sigma_1, \ldots, \sigma_{n-1}$ and the following defining relations~\cite{Ar47}: 
\begin{eqnarray}	
\sigma_i \sigma_{i+1} \sigma_i & = & \sigma_{i+1} \sigma_i \sigma_{i+1},  \qquad  i=1, 2, \ldots, n-2, \label{rel1}\\
\sigma_i \sigma_j  & = & \sigma_j \sigma_i,  \qquad \qquad  |i - j| \geq 2.	\label{rel2}	
\end{eqnarray}
A geometric interpretation of $B_n$ is well known, it is isomorphic to a group of geometric braids on $n$ strings, and a mapping class group of an $n$-punctured disc~\cite{KaTu}. 
By adding the relations
\begin{equation}	
\sigma_i^2 = 1,  \qquad  i=1, 2, \ldots, n-1, \label{relflat}
\end{equation}
we get the \emph{flat braid group} $FB_n$ on $n$ strings. 	

The \emph{virtual braid group} $VB_n$ on $n$ strings is the group with two families of generators, classical and virtual, denoted by 
$\sigma_1, \ldots, \sigma_{n-1}$ and $\rho_1, \ldots, \rho_{n-1}$,  with the following defining relations: (\ref{rel1}) and (\ref{rel2}) for classical generators; (\ref{rel3}), (\ref{rel4}) and (\ref{rel5}) for virtual generators,   
\begin{eqnarray}
\rho_i \rho_{i+1} \rho_i  & = & \rho_{i+1} \rho_i \rho_{i+1}, \qquad  i=1, 2, \ldots, n-2, \label{rel3} \\
\rho_i \rho_j &  = & \rho_j \rho_i, \qquad \qquad \quad |i - j| \geq 2,  \label{rel4} \\
\rho_i^2  & = & 1,  \qquad \qquad  \qquad  i=1, 2, \ldots, n-1, \label{rel5}
\end{eqnarray} 
and mixed relations~(\ref{rel6}) and (\ref{rel7}) for both classical and virtual generators
\begin{eqnarray}
\sigma_i \rho_j  & = & \rho_j \sigma_i, \qquad \qquad |i - j| \geq 2, \label{rel6} \\
\rho_i \rho_{i+1} \sigma_i  & = & \sigma_{i+1} \rho_i \rho_{i+1}, \qquad  i=1, 2, \ldots, n-2. \label{rel7}
\end{eqnarray} 
It was observed in~\cite{GPV00} that relations~(\ref{rel8}) and (\ref{rel9}) 
\begin{eqnarray}
\rho_i \sigma_{i+1} \sigma_i & = & \sigma_{i+1} \sigma_i \rho_{i+1}, \label{rel8} \\ 
\rho_{i+1} \sigma_i \sigma_{i+1} & = & \sigma_i \sigma_{i+1} \rho_i \label{rel9}
\end{eqnarray}
do not hold in~$VB_n$, so these relations are called~\emph{forbidden} relations. 
By adding relation (\ref{relflat}) to $VB_n$ we get the \emph{flat virtual braid group} $FVB_n$ on $n$ strings. 

Described above relation between braid groups and virtual braid groups admits to construct representations of $VB_n$ by extending  known representations of $B_n$ by corresponding to $\rho_i$ suitable involutions. In particular, Bardakov, Vesnin and Wiest~\cite{BVW12} constructed a representation of $VB_n$ by extending Dynnikov representa\-tion~\cite{D02}, and demonstrated that the representation from~\cite{BVW12} is faithful for $n=2$ and distinguish virtual braids on three strings good enough. Gotin~\cite{Go17} constructed a representation of $VB_n$ by extending a representation of $B_n$ through rook algebras given by Bigelow, Ramos and Yi~\cite{BRY11}. 

In the present note we construct a representation of $VB_n$ by extending a representation of $B_n$ given by Hikami and Inoue in~\cite{HI15} in terms of a cluster algebra (Theorem~\ref{th1}). . It was demonstrated in~\cite{CYZ18} the the representation from~\cite{HI15} allows to compute volume of hyperbolic knot which is the closer of a braid. Further we also construct representations for flat braid group and virtual flat braid groups (Theorem~\ref{th3} and Theorem~\ref{th4}).

\section{Cluster nutations}

Let $V$ be a complex vector space. An automorphism $R$ of the tensor product $V \otimes V$ is said to be an \emph{R-operator} if it satisfies the following  \emph{Yang~- Baxter equation}
$$
(R \otimes \operatorname{Id}) (\operatorname{Id} \otimes R) (R \otimes \operatorname{Id}) = ( \operatorname{Id} \otimes R) (R \otimes \operatorname{Id}) (\operatorname{Id} \otimes R),
$$ 		
where $\operatorname{Id}$ is the identity operator $\operatorname{Id} : V \to V$.

Let us recall the constuction of $R$-operator from~\cite{HI15}. Denote by $\mathbb F_N$ the field of rational functions over $\mathbb C$ of $N$ algebraically independent variables ${\bf x} =(x_1,...,x_N)$. A \emph{cluster seed} is a pair $({\bf x}, {\bf B})$, where
\begin{itemize}
\item ${\bf x} =(x_1,...,x_N)$ is an ordered set of $N$ algebraically independent variables,  
\item ${\bf B}=(b_{ij})$ is an antisymmetric $N \times N$ -- matrix of integers.  
\end{itemize}

For any $k =1, \ldots, N$ define a \emph{mutation} $\mu_k$ of a seed $({\bf x}, {\bf B})$ in direction $k$ as follows  
$$
\mu_k ({\bf x}, {\bf B}) = ({\bf \tilde{x}}, {\bf \tilde{B}}),
$$
where ${\bf \tilde{x}} = (\tilde{x}_1, \ldots, \tilde{x}_N)$ is defined by the rule 
\begin{equation} 
\tilde{x}_i = \begin{cases}
x_i, & \text{if } i \neq k, \\
\frac{\displaystyle 1}{\displaystyle x_k} \left( \prod_{j: b_{jk}>0} \, x_{j}^{b_{jk}} +  \prod_{j: b_{jk}< 0} \, x_{j}^{-b_{jk}} \right), & \text{if } i=k,
\end{cases} \label{x-mute}
\end{equation}
and matrix  ${\bf \tilde{B}} = (\tilde{b}_{ij})$ is calculating by the formula: 
\begin{equation}
\tilde{b}_{ij} = \begin{cases}
-b_{ij}, & \text{if } i=k \text{ or} j=k, \\
 b_{ij} + \frac{\displaystyle |b_{ik}| b_{kj} + b_{ik} |b_{kj}|}{\displaystyle 2}, & \text{ otherwise.}
\end{cases} \label{B-mute}
\end{equation}
A pair $({\bf \tilde{x}}, {\bf \tilde{B}})$ is a cluster seed again. 

Using cluster variables $\bf x$ we define cluster variables ${\bf y} = (y_1, \ldots, y_N)$ by setting  
\begin{equation}
y_j = \prod_{k=1}^N x_k^{b_{kj}}. \label{y-def}
\end{equation}
Mutation $\mu_k$ induces a mutation of a pair  $({\bf y}, {\bf B})$,  $\mu_k ({\bf y}, {\bf B}) = ({\bf \tilde{y}}, {\bf \tilde{B}})$, where  
${\bf \tilde{B}}$ is given by formula~(\ref{B-mute}) and ${\bf \tilde{y}} = (\tilde{y}_1, \ldots, \tilde{y}_N)$ is given by the following formulas: 
\begin{equation}
\tilde{y}_i = \begin{cases}
y_k^{-1}, & \text{if } i=k, \\
y_i (1 + y_k^{-1})^{-b_{ki}}, & \text{if } i \neq k \text{ and } b_{ki} \geq 0, \\
y_i (1 + y_k)^{-b_{ki}}, & \text{if } i \neq k \text{ and }  b_{ki} \leq 0. 
\end{cases} \label{y-mute}
\end{equation}

In~\cite{HI15} a matrix $\bf B$ was taken equal to the adjacency matrix of a quiver (oriented graph) $\Gamma$ presented in figure~\ref{quiver}. Graph $\Gamma$ has $N=3n+1$ vertices. 
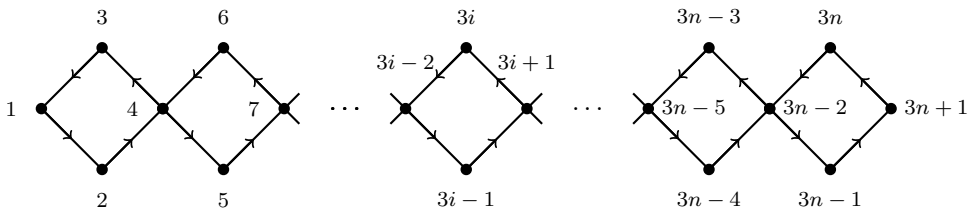
\begin{figure}[ht]
\begin{center}
\unitlength=.1mm
\begin{tikzpicture}[scale=0.4] 
%
\draw [line width=2pt, black] (-10,2) circle[radius=0.1cm];
\draw [line width=2pt, black] (-14,2) circle[radius=0.1cm];
\draw [line width=2pt, black] (-12,4) circle[radius=0.1cm];
\draw [line width=2pt, black] (-12,0) circle[radius=0.1cm];
\draw [thick, black] (-12,0)-- (-14,2) -- (-12,4) -- (-10,2) -- (-12,0);
\draw [thick, black,->] (-10,2)-- (-11,3);
\draw [thick, black,->] (-12,4)-- (-13,3);
\draw [thick, black,->] (-14,2)-- (-13,1);
\draw [thick, black,->] (-12,0)-- (-11,1);
\node (K) at (-15,2) {\tiny $1$};
\node (K) at (-11,2) {\tiny $4$};
\node (K) at (-12,5) {\tiny $3$};
\node (K) at (-12,-1) {\tiny $2$};
\draw [line width=2pt, black] (-6,2) circle[radius=0.1cm];
\draw [line width=2pt, black] (-10,2) circle[radius=0.1cm];
\draw [line width=2pt, black] (-8,4) circle[radius=0.1cm];
\draw [line width=2pt, black] (-8,0) circle[radius=0.1cm];
\draw [thick, black] (-8,0)-- (-10,2) -- (-8,4) -- (-6,2) -- (-8,0);
\draw [thick, black,->] (-6,2)-- (-7,3);
\draw [thick, black,->] (-8,4)-- (-9,3);
\draw [thick, black,->] (-10,2)-- (-9,1);
\draw [thick, black,->] (-8,0)-- (-7,1);
\draw [thick, black] (-6,2)-- (-5.5,2.5);
\draw [thick, black] (-6,2)-- (-5.5,1.5);
\node (K) at (-4,2) {$\ldots$};
\node (K) at (-7,2) {\tiny $7$};
\node (K) at (-8,5) {\tiny $6$};
\node (K) at (-8,-1) {\tiny $5$};
\draw [line width=2pt, black] (-2,2) circle[radius=0.1cm];
\draw [line width=2pt, black] (2,2) circle[radius=0.1cm];
\draw [line width=2pt, black] (0,4) circle[radius=0.1cm];
\draw [line width=2pt, black] (0,0) circle[radius=0.1cm];
\draw [thick, black] (0,0)-- (2,2) -- (0,4) -- (-2,2) -- (0,0);
\draw [thick, black,->] (0,0)-- (1,1);
\draw [thick, black,->] (2,2)-- (1,3);
\draw [thick, black,->] (0,4)-- (-1,3);
\draw [thick, black,->] (-2,2)-- (-1,1);
\draw [thick, black] (6,2)-- (5.5,2.5);
\draw [thick, black] (6,2)-- (5.5,1.5);
\node (K) at (-4,2) {$\ldots$};
\node (K) at (2,3.5) {\tiny $3i+1$};
\node (K) at (-2,3.5) {\tiny $3i-2$};
\node (K) at (0,5) {\tiny $3i$};
\node (K) at (0,-1) {\tiny $3i-1$};
\node (K) at (4,2) {$\ldots$};
\draw [line width=2pt, black] (6,2) circle[radius=0.1cm];
\draw [line width=2pt, black] (10,2) circle[radius=0.1cm];
\draw [line width=2pt, black] (8,4) circle[radius=0.1cm];
\draw [line width=2pt, black] (8,0) circle[radius=0.1cm];
\draw [thick, black] (8,0)-- (10,2) -- (8,4) -- (6,2) -- (8,0);
\draw [thick, black,->] (8,0)-- (9,1);
\draw [thick, black,->] (10,2)-- (9,3);
\draw [thick, black,->] (8,4)-- (7,3);
\draw [thick, black,->] (6,2)-- (7,1);
\draw [thick, black] (-2,2)-- (-2.5,2.5);
\draw [thick, black] (-2,2)-- (-2.5,1.5);
\draw [thick, black] (2,2)-- (2.5,2.5);
\draw [thick, black] (2,2)-- (2.5,1.5);
\node (K) at (-4,2) {$\ldots$};
\node (K) at (7.5,2) {\tiny $3n-5$};
\node (K) at (8,5) {\tiny $3n-3$};
\node (K) at (8,-1) {\tiny $3n-4$};
\draw [line width=2pt, black] (10,2) circle[radius=0.1cm];
\draw [line width=2pt, black] (14,2) circle[radius=0.1cm];
\draw [line width=2pt, black] (12,4) circle[radius=0.1cm];
\draw [line width=2pt, black] (12,0) circle[radius=0.1cm];
\draw [thick, black] (12,0)-- (14,2) -- (12,4) -- (10,2) -- (12,0);
\draw [thick, black,->] (12,0)-- (13,1);
\draw [thick, black,->] (14,2)-- (13,3);
\draw [thick, black,->] (12,4)-- (11,3);
\draw [thick, black,->] (10,2)-- (11,1);
\node (K) at (15.5,2) {\tiny $3n+1$};
\node (K) at (11.5,2) {\tiny $3n-2$};
\node (K) at (12,5) {\tiny $3n$};
\node (K) at (12,-1) {\tiny $3n-1$};
\end{tikzpicture}
\end{center}
\caption{Quiver $\Gamma$ with $3n+1$ vertices}  \label{quiver}
\end{figure}
Namely ${\bf B}$ is $(3n+1) \times (3n+1)$--matrix with enters determined by the quiver~$\Gamma$: 
$$
b_{ij} = \begin{cases} 
1, & \mbox{if there is an edge going from vertex $i$ to vertex $j$,}\\
-1, & \mbox{if there is an edge from vertex $j$ to vertex $i$,}\\
0, &  \mbox{if vertices $i$ and $j$ are not adjacent.}
\end{cases}
$$
In particular, if $n=2$ then matrix ${\bf B}$ is of the form
\begin{equation} 
{\bf B} = 
\begin{pmatrix}
0 & 1 & -1& 0& 0& 0& 0 \cr
-1 & 0 & 0 & 1 & 0& 0& 0 \cr
1 & 0 & 0 & -1 & 0& 0& 0\cr
0 & -1& 1 & 0 & 1 & -1 & 0 \cr
0& 0& 0& -1 & 0 & 0 & 1 \cr
0& 0& 0& 1 & 0 & 0 & -1 \cr
0& 0& 0& 0 & -1 & 1& 0
\end{pmatrix}. \label{matrixB}
\end{equation}

Let us denote by $\Phi : {\mathbb F}_{3n+1} \to {\mathbb F}_{3n+1}$, $n \geq 2$, the operator defined in~\cite[Formula~2-13]{HI15} as a composition of mutations. If $n=2$ then we get ${\bf x} = (x_1, x_2, x_3, x_4, x_5, x_6, x_7)$ and $\Phi$ is of the form    
	$$ 
	\Phi ({\bf x}) =  
\begin{pmatrix}  
	\Phi_1({\bf x}) \\ 
	\Phi_2({\bf x}) \\  
	\Phi_3({\bf x}) \\  
	\Phi_4({\bf x}) \\  
	\Phi_5({\bf x}) \\  
	\Phi_6({\bf x}) \\  
	\Phi_7({\bf x})  
	\end{pmatrix}^T
	= 
	\begin{pmatrix}
	x_1 \\ 
	x_5 \\  
	\frac{\displaystyle x_1 x_3 x_5 + x_3 x_4 x_5 + x_1 x_2 x_6}{\displaystyle x_2 x_4}  \\  
	\frac{\displaystyle x_1 x_3 x_4 x_5 + x_3 x_4^2 x_5 + x_1 x_3 x_5 x_7 + x_3 x_4 x_5 x_7 + x_1 x_2 x_6 x_7}{\displaystyle x_2 x_4 x_6} \\  
	\frac{\displaystyle x_1 x_3 x_5 + x_3 x_4 x_5 + x_1 x_2 x_6}{\displaystyle x_4 x_6} \\ 
	x_3 \\  
	x_7
	\end{pmatrix}^T.
	$$
We denote by $\Psi : {\mathbb F}_{3n+1} \to {\mathbb F}_{3n+1}$, $n \geq 2$, the operator inverse to $\Phi$. If $n=2$ then  
	$$
	\Psi({\bf x}) = 
	\begin{pmatrix} 
	\Psi_1({\bf x}) \\ 
	\Psi_2({\bf x}) \\ 
	\Psi_3({\bf x}) \\ 
	\Psi_4({\bf x}) \\  
	\Psi_5({\bf x}) \\  
	\Psi_6({\bf x}) \\  
	\Psi_7({\bf x})  
	\end{pmatrix}^T
	= 
	\begin{pmatrix}
	x_1 \\
	\frac{\displaystyle x_1 x_3 x_5 + x_1 x_2 x_6 + x_2 x_4 x_6}{\displaystyle x_3 x_4} \\ 
	x_6 \\ 
	\frac{\displaystyle x_1 x_2 x_4 x_6 + x_2 x_4^2 x_6 + x_1 x_3 x_5 x_7 + x_1 x_2 x_6 x_7 + x_2 x_4 x_6 x_7}{\displaystyle x_3 x_4 x_5} \\ 
	x_2 \\ 
	\frac{\displaystyle x_2 x_4 x_6 + x_3 x_5 x_7 + x_2 x_6 x_7}{\displaystyle x_4 x_5} \\ 
	x_7
	\end{pmatrix}^T.
	$$
Following~\cite[Formula~2-13]{HI15} we go from ${\bf x}$-variables to ${\bf y}$-variables. If $n=2$ then ${\bf y} = (y_1, y_2, y_3, y_4, y_5, y_6, y_7)$ and R-operator $\Phi$ will take a form $\varphi$, where  

\begin{equation}
\varphi ({\bf y}) =  
\begin{pmatrix}
\varphi_1({\bf y}) \\  
\varphi_2({\bf y}) \\  
\varphi_3({\bf y}) \\ 
\varphi_4({\bf y}) \\  
\varphi_5(\bf {y}) \\  
\varphi_6({\bf y}) \\  
\varphi_7({\bf y})  
\end{pmatrix}^T
= 
\begin{pmatrix}
y_1 (1 + y_2 + y_2 y_4) \\ 
\frac{\displaystyle y_2 y_4 y_5 y_6}{\displaystyle 1 + y_2 + y_6 + y_2 y_6 + y_2 y_4 y_6} \\  
\frac{\displaystyle 1 + y_2 + y_4 + y_2 y_6 + y_2 y_4 y_6}{\displaystyle y_2 y_4} \\ 
\frac{\displaystyle y_4}{\displaystyle (1 + y_2 + y_2 y_4) (1 + y_6 + y_4 y_6)} \\  
\frac{\displaystyle 1 + y_2 + y_6 + y_2 y_6 + y_2 y_4 y_6}{\displaystyle y_4 y_6} \\  
\frac{\displaystyle y_2 y_3 y_4 y_6}{\displaystyle 1 + y_2 + y_6 + y_2 y_6 + y_2 y_4 y_6} \\  
(1 + y_6 + y_4 y_6) y_7
\end{pmatrix}^T, \label{ytrans}
\end{equation}
as well as $\Psi$ will takes a form $\psi$, where 
\begin{equation}
\psi({\bf y}) = 
\begin{pmatrix}
\psi_1({\bf y}) \\  
\psi_2({\bf y}) \\  
\psi_3({\bf y}) \\  
\psi_4({\bf y}) \\  
\psi_5({\bf y}) \\  
\psi_6({\bf y}) \\  
\psi_7({\bf y})  
\end{pmatrix}^T
= 
\begin{pmatrix}
\frac{\displaystyle y_1 y_3 y_4}{\displaystyle 1 + y_4 +  y_3 y_4} \\ 
\frac{\displaystyle  y_5}{\displaystyle 1 + y_4 + y_3 y_4 + y_4 y_5 + y_3 y_4 y_5} \\ 
(1 + y_4 + y_3 y_4 + y_4 y_5+ y_3 y_4 y_5) y_6 \\ 
\frac{\displaystyle (1 + y_4 + y_3 y_4)(1 + y_4 + y_4 y_5)}{\displaystyle y_3 y_4 y_5} \\  
y_2 (1 + y_4 + y_3 y_4 + y_4 y_5 + y_3 y_4 y_5) \\ 
\frac{\displaystyle  y_3 }{\displaystyle 1 + y_4+ y_3 y_4+ y_4 y_5 + y_3 y_4 y_5} \\  
\frac{\displaystyle y_4 y_5 y_7}{\displaystyle 1 + y_4 + y_4 y_5}. 
\end{pmatrix}^T. \label{y-1trans}
\end{equation}
The following property easily follows from the above formulae.
\begin{lemma}
By setting $y_1 = y_4 = y_7 = -1$ in formulae (\ref{ytrans}) and  (\ref{y-1trans}) we get 
$$
\varphi_1  = \varphi_4 = \varphi_7   = -1 \qquad \mbox{and} \qquad \psi_1  = \psi_4  = \psi_7   = -1.
$$  
\end{lemma}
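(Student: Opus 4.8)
The plan is to prove the lemma by direct substitution into the explicit rational expressions (\ref{ytrans}) and (\ref{y-1trans}), keeping careful track of which linear-in-$y_4$ factors collapse once the constraint $y_4=-1$ is imposed. There is no structural idea needed here: the only thing to organize is the bookkeeping, and the key observation is that the two bracket types $1+y_2+y_2y_4$ and $1+y_6+y_4y_6$ appearing in $\varphi$ both become $1$ when $y_4=-1$, while the bracket $1+y_4+y_3y_4$ (and its sibling $1+y_4+y_4y_5$) appearing in $\psi$ becomes $-y_3$ (resp. $-y_5$) when $y_4=-1$.

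First I would handle the three $\varphi$-components. In $\varphi_1=y_1(1+y_2+y_2y_4)$ the bracket equals $1$ at $y_4=-1$, so $\varphi_1=y_1=-1$. The same bracket sits in the denominator of $\varphi_4=\dfrac{y_4}{(1+y_2+y_2y_4)(1+y_6+y_4y_6)}$ together with the companion bracket $1+y_6+y_4y_6$, which also collapses to $1$; hence $\varphi_4=y_4=-1$. Finally $\varphi_7=(1+y_6+y_4y_6)\,y_7=1\cdot y_7=-1$. Note that for $\varphi_4$ only the value $y_4=-1$ does the work, whereas $\varphi_1$ and $\varphi_7$ additionally use $y_1=-1$ and $y_7=-1$ respectively.

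Next I would do the same for $\psi$. Here $\psi_1=\dfrac{y_1y_3y_4}{1+y_4+y_3y_4}=\dfrac{(-1)(y_3)(-1)}{-y_3}=-1$; for $\psi_4=\dfrac{(1+y_4+y_3y_4)(1+y_4+y_4y_5)}{y_3y_4y_5}$ the numerator becomes $(-y_3)(-y_5)=y_3y_5$ and the denominator becomes $-y_3y_5$, giving $\psi_4=-1$; and $\psi_7=\dfrac{y_4y_5y_7}{1+y_4+y_4y_5}=\dfrac{(-1)(y_5)(-1)}{-y_5}=-1$.

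The only point requiring a word of justification — and this is the closest thing to an obstacle — is that the substitution $y_1=y_4=y_7=-1$ is admissible, i.e. none of the denominators involved vanishes identically after the substitution. This is clear from the computation above, since each relevant denominator reduces to $\pm1$ or to $-y_3$ or $-y_5$, all nonzero in the field of rational functions, so the six evaluations are well defined and yield the asserted values. (As a consistency check one may also observe that these identities are compatible with $\psi=\varphi^{-1}$, but this is not needed.)
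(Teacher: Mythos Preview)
Your proof is correct and is exactly the direct substitution the paper has in mind: the paper does not spell out a proof at all, merely noting that the property ``easily follows from the above formulae,'' and your bookkeeping with the collapsing brackets $1+y_2+y_2y_4\to 1$, $1+y_6+y_4y_6\to 1$, $1+y_4+y_3y_4\to -y_3$, $1+y_4+y_4y_5\to -y_5$ is precisely that verification. Your added remark on admissibility of the substitution is a welcome bit of care beyond what the paper states.
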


\section{Virtual braid groups}

For a vector  ${\bf z} = (z_1, z_2, z_3, z_4) = (y_2, y_3, y_5, y_6)$ of length four we define two operators    	
 \begin{equation}
S  
\begin{pmatrix} 
z_1 \\ 
z_2 \\ 
z_3 \\ 
z_4 \\
\end{pmatrix}^T = 
\begin{pmatrix} 
- \frac{\displaystyle z_1 z_3 z_4}{\displaystyle 1 + z_1 + z_4} \\ 
-\frac{\displaystyle 1 + z_1 + z_4}{\displaystyle z_1} \\ 
-\frac{\displaystyle 1 + z_1 + z_4}{\displaystyle z_4} \\ 
-\frac{\displaystyle z_1 z_2 z_4}{\displaystyle 1 + z_1 + z_4} 
\end{pmatrix}^T,  \quad 
S^{-1} 
\begin{pmatrix} 
z_1 \\ 
z_2 \\ 
z_3 \\ 
z_4 \\
\end{pmatrix}^T = 
\begin{pmatrix}
- \frac{\displaystyle z_3}{\displaystyle z_2 + z_3 + z_2 z_3} \\\ 
- (z_2 + z_3 + z_2 z_3) z_4 \\ 
- z_1 (z_2 + z_3 + z_2 z_3)  \\ 
-\frac{\displaystyle z_2}{\displaystyle z_2 + z_3 + z_2 z_3}  
\end{pmatrix}^T \label{S-eqn} 
\end{equation}
 and an involution   
\begin{equation}
T (z_1, z_2, z_3, z_4) = (z_3, z_4, z_1, z_2). \label{T-eqn}
\end{equation}	
Now for $n\geq 2$ we define operators $S_i^{\pm 1}$ and $T_i$, $i=1, \ldots, n-1$, which act on  vector ${\bf z} = (z_1, z_2, \ldots, z_{2n})$ of length $2n$ by the following rule. Operators $S_i^{\pm 1}$ and $T_i$ act on 4-tuple  $(z_{2i-1}, z_{2i}, z_{2i+1}, z_{2i+2})$ in the same way as operators $S^{\pm 1}$ and $T$ act on 4-tuple $(z_1, z_2, z_3, z_4)$, and do not change other components of ${\bf z}$: 
$$
S_i^{\pm 1} = I^{2i-2} \otimes S^{\pm 1} \otimes I^{2n-2i-2}, \qquad T_i = I^{2i-2} \otimes T \otimes I^{2n-2i-2}.
$$ 

For $n \geq 2$ we denote by $\Theta_n$ the group generated by $S_i$, $T_i$, $i=1, \ldots, n-1$, with composition as a  group operation. Define a map   $ F : VB_n \to \Theta_n$ by setting  
\begin{equation}
F (\sigma_i) = S_i, \qquad F(\rho_i) = T_i, \qquad i=1, \ldots, n-1.  \label{map}
\end{equation}

\begin{lemma} \label{l1}
Let $w$ be a word in $VB_n$. Then for a vector of algebraically independent variables ${\bf z} = (z_1, z_2, \ldots, z_{2n})$ in the image of $F(w) ({\bf z})$ no coordinate turns into zero or infinity.
\end{lemma}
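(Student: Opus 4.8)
The plan is to argue by induction on the length of the word $w$ in the generators $\sigma_i^{\pm1}, \rho_i$, tracking which coordinates of $F(w)(\mathbf z)$ remain ``admissible'' in the sense that no rational-function coordinate is identically $0$ or $\infty$. Since $\mathbf z$ is a tuple of algebraically independent variables, it suffices to check that at each step the relevant rational expressions are nonzero as elements of the field of fractions; the only way a coordinate can degenerate is if a numerator or denominator vanishes identically.

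First I would examine the base case and the atomic step: applying a single operator $S_i^{\pm1}$ or $T_i$ to a tuple whose $4$-block $(z_{2i-1},z_{2i},z_{2i+1},z_{2i+2})$ consists of nonzero, non-infinite rational functions. For $T_i$ this is immediate since $T$ just permutes coordinates. For $S$ and $S^{-1}$, inspecting the formulae in~(\ref{S-eqn}), each output coordinate is (up to sign) a monomial in the inputs divided by a ``cluster polynomial'' such as $1+z_1+z_4$ or $z_2+z_3+z_2z_3$. So the key claim to isolate is: \emph{if $a,b,c,d$ are nonzero elements of a field extension of $\mathbb C$ that are ``generic enough,'' then the polynomials $1+a+d$ and $b+c+bc$ appearing in the denominators do not vanish.} The subtlety is that after several compositions the entries are no longer independent variables, so I cannot simply say ``a sum of a constant and monomials in independent variables is nonzero.''

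The way around this, and the heart of the argument, is to observe that the expressions produced by iterated mutation always keep a \emph{positivity/Laurent} structure: by the Laurent phenomenon for cluster algebras (which underlies the construction in~\cite{HI15}), every coordinate of $F(w)(\mathbf z)$, when written in the original variables $z_1,\dots,z_{2n}$, is a Laurent monomial times a polynomial with \emph{positive} integer coefficients, divided by a product of such positive polynomials. Concretely I would prove by induction that each coordinate of $F(w)(\mathbf z)$ has the form $\pm M \cdot \frac{P}{Q}$ where $M$ is a Laurent monomial in the $z_k$ and $P,Q$ are polynomials in the $z_k$ with all coefficients positive (in particular $P,Q \neq 0$ and have nonzero constant-or-leading behaviour). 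Under this hypothesis the denominators appearing inside $S^{\pm1}$ — namely $1 + z_1 + z_4$ and $z_2 + z_3 + z_2 z_3$ evaluated at positive-coefficient Laurent expressions — are again sums/products with positive coefficients, hence nonzero, and the numerator monomials are manifestly nonzero; so the inductive form is preserved and no coordinate becomes $0$ or $\infty$.

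The main obstacle is precisely verifying that the positive-Laurent form is closed under $S$ and $S^{-1}$: one must check that substituting positive-coefficient Laurent expressions into $1+z_1+z_4$, $\frac{z_1z_3z_4}{1+z_1+z_4}$, etc., and likewise for $S^{-1}$, returns expressions of the same shape, and that the same holds for the idle coordinates. This is essentially a restatement, in the four-variable reduced setting, of the Laurent phenomenon for the mutations of~\cite{HI15} together with the specialization $y_1=y_4=y_7=-1$ from Lemma~1 that reduces $\varphi,\psi$ to $S,S^{-1}$; I would state it as an auxiliary lemma, verify the atomic cases by direct inspection of~(\ref{S-eqn}) and~(\ref{T-eqn}), and then the induction on word length closes immediately. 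A short remark would note that the sign $\pm$ never creates a problem because it cannot cause vanishing, and that $T_i$ trivially preserves the form.
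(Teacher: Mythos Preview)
Your inductive positivity claim fails already at the second step. After applying $S_1$ to $(z_1,\dots,z_6)$ and then $S_2$, the paper's own computation in the proof of Theorem~\ref{th1} shows that several coordinates involve the polynomial $1+z_1-z_4z_6$ (for instance the third coordinate becomes $-\frac{(1+z_1+z_4)z_5z_6}{1+z_1-z_4z_6}$). This polynomial has coefficients of mixed sign and cannot be written as $\pm M\cdot P$ with $M$ a Laurent monomial and $P$ a positive-coefficient polynomial. So the form $\pm M\cdot\frac{P}{Q}$ with $P,Q$ positive is \emph{not} closed under the action of the $S_i$, and the induction breaks. The underlying issue is that the Laurent phenomenon you invoke pertains to cluster $\mathbf{x}$-variables, not to the $\mathbf{y}$-variables on which $\varphi,\psi$ act; moreover the passage from $\varphi,\psi$ to $S,S^{-1}$ is obtained by the \emph{negative} specialization $y_1=y_4=y_7=-1$, which immediately destroys any sign-positivity that might have been present.

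The paper's argument avoids all of this with a one-line specialization trick: the point $\mathbf{z}'=(-1,\dots,-1)$ is a simultaneous fixed point of every $S_i^{\pm1}$ and $T_i$ (a direct check from~(\ref{S-eqn}) and~(\ref{T-eqn})), so $F(w)(\mathbf{z}')=\mathbf{z}'$ for every word $w$. Inductively, if each coordinate of $F(w')(\mathbf z)$ is a nonzero rational function taking the value $-1$ at $\mathbf z'$, then the denominators and numerators arising when one applies the next generator are rational functions whose value at $\mathbf z'$ is $-1\neq 0$, hence they are not identically zero; thus no coordinate of $F(w)(\mathbf z)$ can be identically $0$ or $\infty$. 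This fixed-point/specialization idea is both shorter and avoids any appeal to cluster-algebraic positivity; you should replace the positivity induction by this argument.
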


\begin{proof}
Consider $2n$-tuple ${\bf z}' = (-1, -1, \ldots, -1)$. It is easy to see from (\ref{S-eqn}) and (\ref{T-eqn}) that $S_i^{\pm 1} ({\bf z}') = {\bf z}'$ and $T_i ({\bf z}') = {\bf z}'$ for each $i$. Hence $F(w) ({\bf z}') = {\bf z}' = (-1, -1, \ldots, -1)$. Hence, in the image of $F(w)({\bf z})$ no coordinate can turn into zero or infinity, because for $z_i = -1$, $i=1, \ldots, 2n$, all coordinates of the image will be equal to $-1$.
\end{proof}	
	 
\begin{theorem} \label{th1}
Map $F : VB_n \to \Theta_n$, $n \geq 2$, defined by (\ref{map}) is a homomorphism.  
\end{theorem}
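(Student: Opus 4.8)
To show $F$ is a homomorphism it suffices to verify that the images $S_i, T_i$ satisfy all the defining relations of $VB_n$, namely (\ref{rel1}), (\ref{rel2}), (\ref{rel3}), (\ref{rel4}), (\ref{rel5}), (\ref{rel6}), (\ref{rel7}). The relations split into two easy groups and two hard groups. First, the purely virtual relations (\ref{rel3}), (\ref{rel4}), (\ref{rel5}): since $T$ is the explicit involution $(z_1,z_2,z_3,z_4)\mapsto(z_3,z_4,z_1,z_2)$, the operators $T_i$ act on the $2n$-tuple ${\bf z}$ exactly as the standard transpositions $\rho_i$ act on a $1$-punctured-free permutation-type action on pairs of coordinates $(z_{2i-1},z_{2i})\leftrightarrow(z_{2i+1},z_{2i+2})$; concretely $T_i$ swaps the blocks indexed by $i$ and $i+1$, so $T_i^2=\mathrm{Id}$, $T_iT_j=T_jT_i$ for $|i-j|\ge 2$, and $T_iT_{i+1}T_i=T_{i+1}T_iT_{i+1}$ all hold for the tautological reason that these are the symmetric-group relations realized on disjoint/overlapping blocks. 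Second, the far-commutation relations (\ref{rel2}) and (\ref{rel6}): if $|i-j|\ge 2$ then the $4$-tuples $(z_{2i-1},\dots,z_{2i+2})$ and $(z_{2j-1},\dots,z_{2j+2})$ are disjoint, so $S_i^{\pm1}$ (resp.\ $T_i$) and $S_j^{\pm1}$ act on disjoint sets of coordinates and therefore commute; this is immediate from the tensor-product definition $S_i^{\pm1}=I^{2i-2}\otimes S^{\pm1}\otimes I^{2n-2i-2}$.

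The substantive content is in the two remaining families, both local (they involve only three consecutive strands, hence only the coordinates $z_{2i-1},\dots,z_{2i+4}$), so it suffices to check them for $n=3$, i.e.\ on a $6$-tuple. The braid relation (\ref{rel1}), $S_iS_{i+1}S_i = S_{i+1}S_iS_{i+1}$, is exactly the Yang--Baxter equation for the operator $S$ in the $R$-operator formalism of the previous section. The plan here is to identify $S$ with (the relevant truncation of) the Hikami--Inoue $R$-operator $\varphi$ of (\ref{ytrans}): comparing (\ref{S-eqn}) with $\psi$ in (\ref{y-1trans}) after the substitution $y_1=y_4=y_7=-1$ (allowed by the Lemma, which guarantees these values are preserved), one sees that the four "surviving" coordinates $(y_2,y_3,y_5,y_6)=(z_1,z_2,z_3,z_4)$ transform under $\psi$ precisely by $S^{-1}$ — up to checking signs and matching which of $\varphi,\psi$ corresponds to $S$ versus $S^{-1}$. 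Since $\varphi$ (equivalently $\Phi$) is an $R$-operator by \cite{HI15}, it satisfies the Yang--Baxter equation, and restricting that identity to the invariant locus $y_1=y_4=y_7=-1$ (and its translates under the block structure) yields the braid relation for the $S_i$. One must be careful that the "spectator" variables $y_1,y_7$ of the $n=2$ block and the overlap variable ($y_4$, the "central" coordinate shared between adjacent blocks) are handled consistently when two copies of the $R$-operator are composed; this bookkeeping — tracking how the $3n+1$ cluster variables of $\Gamma$ restrict to the $2n$ variables $z_k$ and how the overlap/boundary variables behave under composition — is the main obstacle, and is essentially the reason the paper set up $\Phi,\Psi$ and the Lemma first.

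Finally the mixed relation (\ref{rel7}), $T_iT_{i+1}S_i = S_{i+1}T_iT_{i+1}$: again localize to $n=3$ and compute both sides as rational maps on the $6$-tuple $(z_1,\dots,z_6)$. Here the cleanest route is a direct verification: $T_iT_{i+1}$ is the $3$-block permutation sending block $i\to i+1\to ?$ (it swaps block $i$ with block $i+1$ then block $i+1$ with block $i+2$, i.e.\ the cyclic-type permutation of the three blocks), and one checks that conjugating/composing $S$ by this permutation of coordinate-blocks turns $S_i$ into $S_{i+1}$ in the required sense. Concretely one substitutes the explicit formulas (\ref{S-eqn}), (\ref{T-eqn}) and simplifies both sides to the same rational vector; because $S$ has the structural feature that it only "mixes" the coordinates $z_1$ and $z_4$ of its $4$-tuple while acting by monomials on $z_2,z_3$, the two sides collapse to the same expression after clearing denominators. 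Since all relators of $VB_n$ are thereby satisfied by $\{S_i,T_i\}$, the assignment (\ref{map}) extends to a group homomorphism $F:VB_n\to\Theta_n$, and Lemma~\ref{l1} ensures everything stays inside the domain where these rational operators are defined, so the composition in $\Theta_n$ makes sense.
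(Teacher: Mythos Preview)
Your proposal is correct and follows the same overall strategy as the paper: verify each defining relation of $VB_n$, separating the ``obvious'' ones (far commutation, symmetric-group relations for the $T_i$) from the two local ones (braid and mixed) that need work on a six-tuple.

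The differences are in execution. For the braid relation you plan to deduce it by restricting the Hikami--Inoue Yang--Baxter identity to the invariant locus $y_1=y_4=y_7=-1$, and you flag the bookkeeping of overlap variables as ``the main obstacle''; the paper instead simply cites \cite[Theorem~2.3]{HI15} and, for the reader's convenience, writes out a completely explicit computation of $S_1S_2S_1({\bf z})$ and $S_2S_1S_2({\bf z})$ on $(z_1,\dots,z_6)$, bypassing any restriction argument. For the mixed relation (\ref{rel7}) you anticipate a rational-function computation with denominator-clearing; the paper's argument is purely combinatorial: $T_1T_2$ cyclically permutes the three coordinate-pairs, so $T_1T_2S_1(z_1,\dots,z_6)$ and $S_2T_1T_2(z_1,\dots,z_6)$ both equal $(z_5,z_6,S(z_1,z_2,z_3,z_4))$, with no rational manipulation needed. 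Your route would work, but it is more laborious than what the paper actually does.
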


\begin{proof}
Let us check that operators $S_i$ and $T_i$, $i=1, \ldots, n-1$, act on ${\bf z}$ in such a way that the following identities hold.  
\begin{itemize}
\item[(1)] $S_i S_{i+1} S_i = S_{i+1} S_i S_{i+1}$, where $i = 1, 2, \ldots, n-2$.
\item[(2)] $S_i S_j  = S_j S_i$, where $| i - j | \geq 2$.
\item[(3)] $T_i T_{i+1} T_i = T_{i+1} T_i T_{i+1}$, where $i = 1, 2, \ldots, n-2$.
\item[(4)] $T_i T_j = T_j T_i$, where $| i - j | \geq 2$.
\item[(5)] $T_i^2 = 1$, where $i = 1, 2, \ldots, n-1$.
\item[(6)] $T_i T_{i+1} S_i = S_{i+1} T_i T_{i+1}$, where $i = 1, 2, \ldots, n-2$.
\end{itemize} 
Obviously, it is enough to consider the case $i=1$. Identities (1) and (2) are particular cases of~\cite[Theorem~2.3]{HI15}. Nevertheless, we present a strightforward proof of (1) for a reader convenience.  Let ${\bf z} = (z_1, z_2, z_3, z_4, z_5, z_6)$. Consider the left-side part of (1)  
\small{$$
\begin{gathered}
S_1 S_2 S_1 ({\bf z}) = S_1 S_2 S_1(z_1, z_2, z_3, z_4, z_5, z_6) \\
=S_1 S_2 \left( -\frac{z_1 z_3 z_4}{1 + z_1 + z_4}, -\frac{1 + z_1 + z_4}{z_1}, -\frac{1 + z_1 + z_4}{z_4}, -\frac{z_1 z_2 z_4}{1 + z_1 + z_4}, z_5, z_6 \right) \\
= S_1 \left( -\frac{z_1 z_3 z_4}{1 + z_1 + z_4}, -\frac{1 + z_1 + z_4}{z_1}, -\frac{(1 + z_1 + z_4) z_5 z_6}{1 + z_1 - z_4 z_6}, -\frac{1 + z_1 - z_4 z_6}{1 + z_1 + z_4}, \frac{1 + z_1 - z_4 z_6}{z_4 z_6}, \frac{z_1 z_2 z_4 z_6}{1 + z_1 - z_4 z_6} \right) \\ 
 = \left( \frac{z_1 z_3 z_5 z_6}{1 - z_1 z_3 + z_6}, \frac{1 - z_1 z_3 + z_6}{z_1 z_3}, \frac{z_4 (1 - z_1 z_3 + z_6)}{1 + z_1 - z_4 z_6}, \frac{z_3 (1 + z_1 - z_4 z_6)}{1 - z_1 z_3 + z_6},  \frac{1 + z_1 - z_4 z_6}{z_4 z_6},  \frac{z_1 z_2 z_4 z_6}{1 + z_1 - z_4 z_6} \right).
\end{gathered}
$$
}
The right-side part of (1) is equal 
\small{$$
\begin{gathered}
S_2 S_1 S_2 ({\bf z})  =  S_2 S_1 S_2 ( z_1, z_2, z_3, z_4, z_5, z_6) \\
=S_2 S_1 \left( z_1, z_2, -\frac{z_3 z_5 z_6}{1 + z_3 + z_6}, -\frac{1 + z_3 + z_6}{z_3}, -\frac{1 + z_3 + z_6}{z_6}, -\frac{z_3 z_4 z_6}{1 + z_3 + z_6} \right) \\ 
 = S_2 \left( \frac{z_1 z_2 z_5 z_6}{1 - z_1 z_2 + z_6}, \frac{1 - z_1 z_3 + z_6}{z_1 z_3}, -\frac{1 - z_1 z_3 + z_6}{1 + z_3 + z_6}, \frac{z_1 z_2 (1 + z_3 + z_6)}{-1 + z_1 z_3 - z_6}, -\frac{1 + z_3 + z_6}{z_6},  -\frac{z_3 z_4 z_6}{1 + z_3 + z_6} \right) \\
= \left( \frac{z_1 z_3 z_5 z_6}{1 - z_1 z_3 + z_6}, \frac{1 - z_1 z_3 + z_6}{z_1 z_3}, \frac{z_4 (1 - z_1 z_3 + z_6)}{1 + z_1 - z_4 z_6}, \frac{z_3 (1 + z_1 - z_4 z_6)}{1 - z_1 z_3 + z_6},  \frac{1 + z_1 - z_4 z_6}{z_4 z_6}, \frac{z_1 z_2 z_4 z_6}{1 + z_1 - z_4 z_6} \right).
\end{gathered}	
$$
}
Thus, the identity (1) holds. 

Let us demonstrate that the identity (6) holds, Indeed from the one hand, 
$$
\begin{gathered} 
T_1 T_2 S_1 ({\bf z}) = 
T_1 T_2 S_1 (z_1, z_2, z_3, z_4, z_5, z_6) = T_1 T_2 (S(z_1), S(z_2), S(z_3), S(z_4), z_5, z_6) \\
= T_1 (S(z_1), S(z_2), z_5, z_7, S(z_3), S(z_4)) =  (z_5, z_6, S(z_1), S(z_2), S(z_3), S(z_4) ), 
\end{gathered}
$$
and from the other hand, 
$$
\begin{gathered} 
S_2 T_1 T_2 ({\bf z}) = 
S_2 T_1 T_2 (z_1, z_2, z_3, z_4, z_5, z_6) = S_2 T_1 (z_1, z_2, z_5, z_6, z_3, z_4) \\
= S_2 (z_5, z_6, z_1, z_2, z_3, z_4) =  (z_5, z_6, S(z_1), S(z_2), S(z_3), S(z_4) ).
\end{gathered}
$$
Remaining identities  (2), (3), (4) and (5) hold obviously. 
\end{proof}

Theorem~\ref{th1} allows to distinguish elements of the virtual braid group $VB_n$ by computing their images which are vectors of lengths  $2n$. 
\begin{example}
{\rm
Let $w_1 = \sigma_1 \rho_1 \sigma_1 \in VB_2$.  By formulae (\ref{S-eqn}) and (\ref{T-eqn}) the operator $F(w_1)$ acts on $ (1, 2, 2, 1)$ in the following way: 
$$
F(w_1) (1, 2, 2, 1)= \left(-\frac{6}{5}, -\frac{5}{3}, -\frac{5}{3}, -\frac{6}{5} \right) \neq (1, 2, 2, 1).
$$ 
Therefore, the homomorphism $F$ distinguishes $w_1$ from a trivial braid. 
}
\end{example} 	

\begin{example}
{\rm 
It is known~\cite{Ma05} that a generalized Burau representation does nor distinguish a braid $w_2=(\sigma_1^2 \rho_1 \sigma_1^{-1} \rho_1 \sigma_1^{-1} \rho_1)^2 \in VB_2$ from a trivial braid.  By acting $F(w_2)$ on the vector $ (1, 2, 2, 1)$ we get  
$$
F(w_2) (1, 2, 2, 1)= \left( -\frac{44}{19}, -\frac{19}{22}, -\frac{19}{22}, -\frac{44}{19} \right) \neq (1, 2, 2, 1).
$$ 
Therefore, the homomorphism $F$ distinguishes $w_2$ from a trivial braid. 
}
\end{example} 

\begin{example}
{\rm 
Consider 
$$
w_3=\sigma_1 \rho_2 \sigma_1 \sigma_2^{-1} \sigma_1 \sigma_2 \sigma_1^{-1} \rho_1 \sigma_2 \rho_1 \sigma_1 \rho_2 \sigma_1^{-1} \rho_2 \sigma_2^{-1} \sigma_1^{-1} \sigma_2 \sigma_1^{-1}   \rho_2 \sigma_1^{-1}  \in VB_3.
$$  
It is known that a representation from~\cite{BVW12} does not distinguish $w_3$ from a trivial braid. By acting $F(w_3)$ on $ (1, 2, 2, 1, 1, 2)$ we get  
\small{
$$
\begin{gathered}
F(w_3) (1, 2, 2, 1, 1, 2) = \left( \frac{2488285076682521504}{1290542656863845663}, \frac{1290542656863845663}{1244142538341260752}, \right. \\ \left. \frac{1290542656863845663}{563568067426145589}, \frac{1127136134852291178}{1290542656863845663}, \frac{574648281}{1268603408}, \frac{2537206816}{574648281} \right) \neq (1, 2, 2, 1, 1, 2).
\end{gathered}		
$$
} 
Therefore, the homomorphism $F$ distinguishes $w_3$ from a trivial braid. 
}
\end{example} 
	
\section{Forbidden relations}	

In this section we demonstrate that the forbidden relations do not hold in the group  $\Theta_n$. 	
	
\begin{lemma} \label{lemma2}
Let ${\bf z} = (z_1, z_2,  \ldots, z_{2n-1}, z_{2n})$ and $S_{i}, S_{i+1}, T_{i}, T_{i+1} \in \Theta_n$.   
\begin{itemize}
\item[(a)] The forbidden relation 
 \begin{equation}
T_{i} S_{i+1} S_{i} ({\bf z}) = S_{i+1} S_{i} T_{i+1} ({\bf z}) \label{f1}
\end{equation}
does not hold if and only if the vector  ${\bf z}$ is such that $z_j \neq -1$ for $j=2i-1$, $2i+2$ and $2i+4$. 
\item[(b)] The forbidden relation 
\begin{equation}
T_{i+1} S_{i} S_{i+1}  ({\bf z})  = S_{i} S_{i+1} T_{i+1}  ({\bf z})  \label{f2}
\end{equation}
does not hold if and only if the vector  ${\bf z}$ is such that $z_j \neq -1$ for $j=2i-1$, $2i+1$ and $2i+4$. 
\end{itemize} 
\end{lemma}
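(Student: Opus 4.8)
The plan is to verify (a) and (b) by direct computation, after the usual reduction to $i=1$: since $S_j^{\pm1}$ and $T_j$ act nontrivially only on the coordinates indexed by $\{2j-1,2j,2j+1,2j+2\}$ and act there exactly as $S^{\pm1},T$ do on a $4$-tuple, the general case follows from $i=1$ by shifting all indices by $2(i-1)$. So write ${\bf z}=(z_1,\dots,z_6)$ and abbreviate
\[
P=1+z_1+z_4,\quad P'=1+z_1+z_6,\quad R=1+z_1-z_4z_6,\quad Q=1+z_3+z_6,\quad R'=1+z_6-z_1z_3 .
\]
By Lemma~\ref{l1} one may take the $z_j$ algebraically independent, so none of these expressions, and no $z_j$, vanishes and all rational functions below are defined; the statement then concerns which specialisations make the two sides agree.

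\emph{Part (a).} A direct computation — in which the $6$-tuple $S_2S_1({\bf z})$ is exactly the one written out inside the proof of Theorem~\ref{th1}, and $T_1$ merely swaps the coordinate pairs $(1,2)$ and $(3,4)$ — gives
\[
T_1S_2S_1({\bf z})=\bigl(-\tfrac{Pz_5z_6}{R},\ -\tfrac{R}{P},\ -\tfrac{z_1z_3z_4}{P},\ -\tfrac{P}{z_1},\ \tfrac{R}{z_4z_6},\ \tfrac{z_1z_2z_4z_6}{R}\bigr).
\]
Since $T_2({\bf z})=(z_1,z_2,z_5,z_6,z_3,z_4)$ differs from ${\bf z}$ only by interchanging $z_4$ and $z_6$, applying $S_2S_1$ to it (which replaces $P$ by $P'$ and leaves $R$ unchanged) yields
\[
S_2S_1T_2({\bf z})=\bigl(-\tfrac{z_1z_5z_6}{P'},\ -\tfrac{P'}{z_1},\ -\tfrac{P'z_3z_4}{R},\ -\tfrac{R}{P'},\ \tfrac{R}{z_4z_6},\ \tfrac{z_1z_2z_4z_6}{R}\bigr).
\]
The last two coordinates coincide identically, and each of the first four coordinates of the two vectors is equal if and only if $PP'=z_1R$. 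The crux is the polynomial identity
\[
PP'-z_1R=(1+z_1)(1+z_4)(1+z_6),
\]
obtained by expanding. Hence (\ref{f1}) holds exactly when one of $z_1,z_4,z_6$ equals $-1$; equivalently it fails precisely when $z_1,z_4,z_6$ are all $\ne-1$, which in the general case reads $z_j\ne-1$ for $j=2i-1,2i+2,2i+4$.

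\emph{Part (b).} The computation is entirely parallel. One obtains
\[
T_2S_1S_2({\bf z})=\bigl(\tfrac{z_1z_3z_5z_6}{R'},\ \tfrac{R'}{z_1z_3},\ -\tfrac{Q}{z_6},\ -\tfrac{z_3z_4z_6}{Q},\ -\tfrac{R'}{Q},\ -\tfrac{z_1z_2Q}{R'}\bigr),
\]
while $T_1({\bf z})=(z_3,z_4,z_1,z_2,z_5,z_6)$ interchanges $z_1$ and $z_3$, so that applying $S_1S_2$ to it sends $Q$ to $P'$ and leaves $R'$ unchanged:
\[
S_1S_2T_1({\bf z})=\bigl(\tfrac{z_1z_3z_5z_6}{R'},\ \tfrac{R'}{z_1z_3},\ -\tfrac{R'}{P'},\ -\tfrac{z_3z_4P'}{R'},\ -\tfrac{P'}{z_6},\ -\tfrac{z_1z_2z_6}{P'}\bigr).
\]
Now the first two coordinates agree identically and the remaining four are equal if and only if $QP'=z_6R'$; the identity $QP'-z_6R'=(1+z_1)(1+z_3)(1+z_6)$ then shows that (\ref{f2}) fails exactly when $z_1,z_3,z_6$ are all $\ne-1$, i.e.\ $z_j\ne-1$ for $j=2i-1,2i+1,2i+4$.

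No real obstacle is expected: the whole proof is the bookkeeping in the two composite rational-function evaluations together with verification of the two cubic factorisations $PP'-z_1R=(1+z_1)(1+z_4)(1+z_6)$ and $QP'-z_6R'=(1+z_1)(1+z_3)(1+z_6)$. The one point needing a moment's care is the ``only if'' direction — exhibiting, in each case, a coordinate in which the two sides genuinely differ once the three relevant variables all avoid $-1$. For (\ref{f1}) the second coordinate serves, since $\bigl(-\tfrac{R}{P}\bigr)-\bigl(-\tfrac{P'}{z_1}\bigr)=\tfrac{PP'-z_1R}{z_1P}=\tfrac{(1+z_1)(1+z_4)(1+z_6)}{z_1P}$ is nonzero exactly under the stated hypothesis, and the case of (\ref{f2}) is identical.
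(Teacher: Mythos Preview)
Your proof is correct and follows the same route as the paper's own argument: reduce to $i=1$, compute both sides explicitly using the formula for $S_2S_1({\bf z})$ (resp.\ $S_1S_2({\bf z})$) from the proof of Theorem~\ref{th1}, and show that equality of the remaining coordinates is governed by the single factorisation $(1+z_1)(1+z_4)(1+z_6)$ (resp.\ $(1+z_1)(1+z_3)(1+z_6)$). Your version is in fact slightly more economical than the paper's: by observing that \emph{all four} non-trivial coordinate comparisons reduce to the same polynomial identity $PP'=z_1R$ (resp.\ $QP'=z_6R'$), you get both directions at once, whereas the paper derives the necessary condition from one pair of coordinates and then verifies sufficiency by separately plugging in $z_1=-1$, $z_4=-1$, $z_6=-1$ (resp.\ $z_3=-1$) and checking that the two sides coincide in each case.
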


\begin{proof} (a) Without loss of generality, we can assume  $i=1$.  The left-hand side of (\ref{f1}) is    
\small{$$
\begin{gathered}
T_1 S_2 S_1 ({\bf z}) = T_1 S_2 S_1 (z_1, z_2, z_3, z_4, z_5, z_6) \\
 = T_1 \left( 
 - \frac{z_1 z_3 z_4}{1 + z_1 + z_4}, - \frac{1 + z_1 + z_4}{z_1}, - \frac{(1 + z_1 + z_4) z_5 z_6}{1 + z_1 - z_4 z_6}, - \frac{1 + z_1 - z_4 z_6}{1 + z_1 + z_4},  \frac{1 + z_1 - z_4 z_6}{z_4 z_6}, \frac{z_1 z_2 z_3 z_6}{1+z_1-z_4 z_6}
\right) \\
 = \left(
- \frac{(1 + z_1 + z_4) z_5 z_6}{1 + z_1 - z_4 z_6}, - \frac{1 + z_1 - z_4 z_6}{1 + z_1 + z_4}, - \frac{z_1 z_3 z_4}{1 + z_1 + z_4}, - \frac{1 + z_1 + z_4}{z_1}, \frac{1 + z_1 - z_4 z_6}{z_4 z_6}, \frac{z_1 z_2 z_4 z_6}{1 + z_1 -z_4 z_6}
\right)
\end{gathered}
$$}
and the right-hand side is equal
\small{$$
\begin{gathered}
S_2 S_1 T_2 ({\bf z}) =  S_2 S_1 T_2 (z_1, z_2, z_3, z_4, z_5, z_6)  = S_2 S_1 (z_1, z_2, z_5, z_6, z_3, z_4) \\
= \left(
- \frac{z_1 z_5 z_6}{1 + z_1  + z_6}, - \frac{1 + z_1 +z_6}{z_1}, - \frac{(1 + z_1 + z_6) z_3 z_4}{1 + z_1  - z_4 z_6}, - \frac{1 + z_1 - z_6 z_4}{1 + z_1 + z_6}, \frac{1 + z_1 - z_6 z_4}{z_6 z_4}, \frac{z_1 z_2 z_6 z_4}{1+z_1- z_6 z_4}
\right)
\end{gathered}.
$$}
Here we used formulae for $S_2 S_1 ({\bf z})$ from theorem~\ref{th1}. The fifth and sixth coordinates are equal. Comparison of the third and fourth coordinates leads to the equation 
\begin{equation}
z_1 (1+z_1 - z_4 z_6) = (1 + z_1 + z_4) (1+z_1+z_6), \label{eqn1}
\end{equation}
which is equivalent to  
\begin{equation}
(z_1+1) (z_4+1) (z_6+1) = 0. \label{eqn2}
\end{equation}
Therefore, to obtain the relation (a) the necessary condition is  that at least one of number  $z_1$, $z_4$ or $z_6$ is equal to $-1$. But if at least one of numbers  $z_1$, $z_4$ or $z_6$ is equal to  $-1$, then the left and right parts of (a) coincide. Indeed, if $z_1 = -1$ then
$$
T_1 S_2 S_1 (-1, z_2, z_3, z_4, z_5. z_6) = S_2 S_1 T_2 (-1, z_2, z_3, z_4, z_5, z_6) =  (z_5, z_6, z_3, z_4, -1, z_2).
$$
Analogously, if $z_4 = -1$ then
$$
\begin{gathered} 
T_1 S_2 S_1 (z_1, z_2, z_3, -1, z_5, z_6) = S_2 S_1 T_2 (z_1, z_2, z_3, -1, z_5, z_6) \cr
 = \left(-\frac{z_1 z_5 z_6}{1 + z_1 + z_6}, - \frac{1 + z_1 + z_6}{z_1}, z_3, -1, - \frac{1 + z_2 + z_6}{z_6}, - \frac{z_1 z_2 z_6}{1+z_1 + z_6} \right),  
\end{gathered}
$$
and if $z_6 = -1$ then  
$$
\begin{gathered}
T_1 S_2 S_1 (z_1, z_2, z_3, z_4, z_5, -1) =  S_2 S_1 T_1 (z_1, z_2, z_3, z_4, z_5, -1) \cr 
= \left(z_5, -1, - \frac{z_1 z_3 z_4}{1 + z_1 + z_4},  - \frac{1 + z_2 + z_4}{z_1}, - \frac{1 + z_1 + z_4}{z_4}, - \frac{z_1 z_2 z_4}{1 + z_1 + z_4} \right).
\end{gathered}
$$
Therefore, the above necessary condition is also sufficient. 

(b) The left-hand part of the relation (\ref{f2}) is equal to  
\small{$$
\begin{gathered}
T_2 S_1 S_2 ({\bf z}) = T_2 S_1 S_2 (z_1, z_2, z_3, z_4, z_5, z_6) \\
 = T_2 \left( 
 \frac{z_1 z_3 z_5 z_6}{1 - z_1z_3 + z_6},  \frac{1 -z_1z_3 + z_6}{z_1 z_3}, -\frac{1 - z_1 z_3 + z_6}{1 + z_3 + z_6}, - \frac{z_1 z_2 (1+z_3+z_6)}{1 - z_1 z_3 + z_6},  - \frac{1 + z_3 + z_6}{z_6}, - \frac{z_3 z_4 z_6}{1+z_3 +z_6}
\right) \\
 = \left(
 \frac{z_1 z_3 z_5 z_6}{1 - z_1 z_3 + z_6},  \frac{1 -z_1 z_3 + z_6}{z_1 z_3},  - \frac{1 + z_3 + z_6}{z_6}, - \frac{z_3 z_4 z_6}{1+z_3  +z_6}, -\frac{1 - z_1 z_3 + z_6}{1 + z_3 + z_6}, - \frac{z_1 z_2 (1+z_3+z_6)}{1 - z_1 z_3 + z_6} 
\right)
\end{gathered}
$$}
and the right-hand part is equal to  
\small{$$
\begin{gathered}
S_1 S_2 T_1 ({\bf z}) =  S_1 S_2 T_1 (z_1, z_2, z_3, z_4, z_5, z_6)  = S_2 S_1 (z_3, z_4,  z_1, z_2, z_5, z_6) \\
= \left( 
 \frac{z_3 z_1 z_5 z_6}{1 - z_3 z_1 + z_6},  \frac{1 -z_3 z_1 + z_6}{z_3 z_1}, - \frac{1 - z_1 z_3 + z_6}{1 + z_1 + z_6}, - \frac{z_3 z_4 (1+z_1+z_6)}{1 - z_3 z_1 + z_6},  - \frac{1 + z_1 + z_6}{z_6}, - \frac{z_1 z_2 z_6}{1+z_1 +z_6}
\right).
\end{gathered}
$$}
Here we used formulae for  $S_1 S_2 ({\bf z})$ from theorem~\ref{th1}. By comparing the third and fourth coordinates we get the equation 
\begin{equation}
(1 + z_3 + z_6)(1 + z_1 + z_6)  =  z_6(1 - z_1 z_3 + z_6), \label{eqn4}
\end{equation}
which is equivalent to  
\begin{equation}
(z_1+1) (z_3+1) (z_6+1) = 0. \label{eqn5}
\end{equation}
Therefore, to obtain (b) the necessary condition is that at least one of  $z_1$, $z_3$ or $z_6$ is equal to  $-1$. But if at least on of this numbers is equl to  $-1$ then left and right parts of  (\ref{f2}) coincide. Indeed, if $z_1 = -1$ then
$$
\begin{gathered} 
T_2 S_1 S_2 (-1, z_2, z_3, z_4, z_5, z_6) = S_2 S_1 T_2 (-1, z_2, z_3, z_4, z_5, z_6) \cr 
 = \left( - \frac{z_3 z_5 z_6}{1 + z_3 + z_6}, - \frac{1 + z_3 + z_6}{z_3}, - \frac{1 + z_3 + z_6}{z_6}, - \frac{z_3 z_4 z_6}{1 + z_3 + z_6}, -1, z_2 \right).
\end{gathered}
$$
Analogously, if $z_3 = -1$ then  
$$
\begin{gathered}
T_2 S_1 S_2 (z_1, z_2, -1, z_4, z_5, z_6) = S_2 S_1 T_2 (z_1, z_2, -1, z_4, z_5, z_6) \cr 
= \left( - \frac{z_1 z_5 z_6}{1 + z_3 + z_6}, - \frac{1 + z_1 + z_6}{z_1}, -1, z_4, - \frac{1 + z_1 + z_6}{z_6}, - \frac{z_1 z_2 z_6}{1 + z_1 + z_6} \right) 
\end{gathered}
$$
and if $z_6 = -1$ then 
$$
T_2 S_1 S_2 (z_1, z_2, z_3, z_4, z_5, -1) = S_2 S_1 T_2 (z_1, z_2, z_3, z_4, z_5, -1) = (z_5, -1, z_3, z_4, z_1, z_2). 
$$
Therefore, the above necessary condition is also sufficient. 
\end{proof}

The obvious consequence of this lemma is the following theorem, which concludes the section.
\begin{theorem}
Let $S_{i}, S_{i+1}, T_{i}, T_{i+1} \in \Theta_n$. 
\begin{itemize}
	\item[(a)] Operators $T_{i} S_{i+1} S_{i}$ and $S_{i+1} S_{i} T_{i+1}$ are different.
	\item[(b)] Operators $T_{i+1} S_{i} S_{i+1}$ and $S_{i} S_{i+1} T_{i+1}$ are different.
\end{itemize}	
So the forbidden relations do not hold in $\Theta_n$. 
\end{theorem}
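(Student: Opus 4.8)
The plan is to derive the theorem as an immediate corollary of Lemma~\ref{lemma2}, so the only real work is to produce a vector $\mathbf{z}$ on which the two sides of each forbidden relation disagree. By Lemma~\ref{lemma2}(a), the operators $T_iS_{i+1}S_i$ and $S_{i+1}S_iT_{i+1}$ fail to agree on any $\mathbf{z}=(z_1,\dots,z_{2n})$ of algebraically independent variables, since such a $\mathbf{z}$ certainly has $z_j\neq -1$ for $j=2i-1,\,2i+2,\,2i+4$; equivalently, one may simply pick a concrete rational point, e.g.\ all coordinates equal to $1$ (or any value other than $-1$ in the three relevant slots), and invoke the lemma to conclude that the images differ. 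Since two operators that send some input to distinct outputs cannot be equal as maps, this shows $T_iS_{i+1}S_i\neq S_{i+1}S_iT_{i+1}$ in $\Theta_n$, which is part~(a). Part~(b) follows in exactly the same way from Lemma~\ref{lemma2}(b), using that algebraically independent variables satisfy $z_j\neq -1$ for $j=2i-1,\,2i+1,\,2i+4$.

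Concretely I would write: choose the vector $\mathbf{z}$ whose coordinates are all equal to some fixed $c\in\mathbb{C}$ with $c\neq -1$ (or, cleanest, take $\mathbf{z}$ to consist of algebraically independent transcendentals). Then the hypotheses of Lemma~\ref{lemma2}(a) hold, so $T_iS_{i+1}S_i(\mathbf{z})\neq S_{i+1}S_iT_{i+1}(\mathbf{z})$, hence the operators are distinct; likewise for~(b) via Lemma~\ref{lemma2}(b). Finally, translating back through the homomorphism $F$ of Theorem~\ref{th1}, the relations $\rho_i\sigma_{i+1}\sigma_i=\sigma_{i+1}\sigma_i\rho_{i+1}$ and $\rho_{i+1}\sigma_i\sigma_{i+1}=\sigma_i\sigma_{i+1}\rho_i$ cannot hold in $\Theta_n$, i.e.\ the forbidden relations~(\ref{rel8}) and~(\ref{rel9}) are genuinely absent from the image of $F$.

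There is essentially no obstacle here: the substance is entirely contained in Lemma~\ref{lemma2}, whose proof already carries out the coordinate comparison reducing each forbidden relation to the vanishing of $(z_1+1)(z_4+1)(z_6+1)$ (respectively $(z_1+1)(z_3+1)(z_6+1)$) up to relabeling. The theorem is then a one-line deduction. The only point deserving a word of care is the logical direction: Lemma~\ref{lemma2} is stated as an "if and only if" about when the relation \emph{fails}, so one must note that for a \emph{generic} (or merely suitably chosen) $\mathbf{z}$ the failure condition is met, and that failure on a single input is enough to conclude inequality of the operators — there is no need for the relation to fail on all inputs.

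\begin{proof}
By Lemma~\ref{lemma2}(a), the relation $T_{i} S_{i+1} S_{i} ({\bf z}) = S_{i+1} S_{i} T_{i+1} ({\bf z})$ fails for every vector ${\bf z} = (z_1, \ldots, z_{2n})$ with $z_j \neq -1$ for $j = 2i-1,\, 2i+2,\, 2i+4$. In particular, taking ${\bf z}$ to be a vector of algebraically independent variables (or simply any concrete vector whose coordinates in these three positions are all different from $-1$, for instance the all-ones vector), we obtain $T_{i} S_{i+1} S_{i} ({\bf z}) \neq S_{i+1} S_{i} T_{i+1} ({\bf z})$. Since these two operators take some input to distinct outputs, they are distinct elements of $\Theta_n$, which proves~(a). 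The assertion~(b) follows in the same manner from Lemma~\ref{lemma2}(b): for any ${\bf z}$ with $z_j \neq -1$ for $j = 2i-1,\, 2i+1,\, 2i+4$ one has $T_{i+1} S_{i} S_{i+1} ({\bf z}) \neq S_{i} S_{i+1} T_{i+1} ({\bf z})$, so the operators $T_{i+1} S_{i} S_{i+1}$ and $S_{i} S_{i+1} T_{i+1}$ are different.

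Consequently, under the homomorphism $F : VB_n \to \Theta_n$ of Theorem~\ref{th1}, the images $F(\rho_i \sigma_{i+1} \sigma_i) = T_i S_{i+1} S_i$ and $F(\sigma_{i+1} \sigma_i \rho_{i+1}) = S_{i+1} S_i T_{i+1}$ are distinct, and likewise $F(\rho_{i+1} \sigma_i \sigma_{i+1}) = T_{i+1} S_i S_{i+1}$ and $F(\sigma_i \sigma_{i+1} \rho_i) = S_i S_{i+1} T_i$ are distinct. Hence the forbidden relations~(\ref{rel8}) and~(\ref{rel9}) do not hold in $\Theta_n$.
\end{proof}
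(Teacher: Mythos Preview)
Your proposal is correct and matches the paper's approach exactly: the paper states the theorem as ``the obvious consequence'' of Lemma~\ref{lemma2} and gives no further argument, and you have simply spelled out that one-line deduction (exhibit a vector with the relevant coordinates not equal to $-1$ and invoke the lemma). One cosmetic slip: in your final paragraph you write $F(\sigma_i\sigma_{i+1}\rho_i)=S_iS_{i+1}T_i$, which matches the forbidden relation~(\ref{rel9}) but not the statement of part~(b) as printed (which has $T_{i+1}$ on the right); this is in fact an inconsistency already present in the paper between the statements of Lemma~\ref{lemma2}(b)/Theorem~(b) and the computation in the lemma's proof, so just align your notation with whichever version you are citing.
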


\section{Flat braid groups}

Let us consider vector ${\bf z}$ of the form $(z_1, \frac{1}{z_1}, z_3, \frac{1}{z_3})$. Notice that
$$
S({\bf z}) = ( \zeta_1, \frac{1}{\zeta_1}, \zeta_3, \frac{1}{\zeta_3}), 
$$
where 
$$
\zeta_1 = - \frac{z_1 z_3}{1 + z_3 + z_1 z_3}, \qquad \zeta_3 = - (1 + z_3 + z_1 z_3 ).
$$
Also notice that $S^2({\bf z})={\bf z}$.
These obserations inspire to obtain the representation for flat braids.

Consider a vector of algebraically independent variables ${\bf t} = (t_1, t_2, \ldots, t_n)$. Let's define the operators $R_i$, $i = 1, \ldots, n-1$, according to the rule:
$$
R_i : \begin{cases}
t_i \to & - \frac{\displaystyle t_{i} t_{i+1}}{\displaystyle 1 + t_{i+1} + t_i t_{i+1}}, \\
t_{i+1} \to & - (1 + t_{i+1} + t_i t_{i+1}).
\end{cases}
$$ 

Let $F_{FB}$ be a map that match operators $R_i$ with generators $\sigma_i$, $i=1, \ldots, n-1,$ of the flat braid group $T_n$:
$$
F_{FB} (\sigma_i) = R_i.
$$

For $n\geq 2$, denote by $\Omega_n$ the group generated by operators $R_i$, $i=1, \ldots, n-1$, with composition as a group operation.

\begin{lemma} \label{l2}
	Let $w$ be a word in $FB_n$. Then for a vector of algebraically independent variables ${\bf t} = (t_1, t_2, \ldots, t_{n})$ in the image of $F_{FB}(w) ({\bf t})$ no coordinate turns into zero or infinity.
\end{lemma}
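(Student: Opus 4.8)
The plan is to mimic the proof of Lemma~\ref{l1}: rather than tracking all coordinates through an arbitrary word, I would exhibit a single fixed point of all the generators $R_i$ with no zero or infinite entries, and conclude that on that point every word $F_{FB}(w)$ acts trivially, so the image of $F_{FB}(w)(\mathbf t)$ cannot be identically $0$ or $\infty$ in any coordinate (it specializes to a finite nonzero value). The natural candidate, suggested by the computation preceding the lemma and by Lemma~\ref{l1} (where the fixed tuple was $(-1,\dots,-1)$), is $\mathbf t' = (-1,-1,\dots,-1)$: indeed $1 + t_{i+1} + t_i t_{i+1}$ evaluated at $t_i=t_{i+1}=-1$ gives $1-1+1 = 1$, so $t_{i+1}\mapsto -(1)= -1$ and $t_i \mapsto -\frac{(-1)(-1)}{1} = -1$. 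Hence $R_i(\mathbf t') = \mathbf t'$ for every $i$.

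First I would record that each $R_i$ is a well-defined birational map (indeed an element of $\Omega_n$ with inverse the analogous "$S^{-1}$-type" map, using $S^2 = \mathrm{Id}$ on the relevant $4$-tuples, or simply noting $R_i$ has order two on the two affected coordinates), so that $F_{FB}$ sends a word $w$ to a composition $F_{FB}(w)$ of such maps, and each coordinate of $F_{FB}(w)(\mathbf t)$ is a rational function in $t_1,\dots,t_n$. Then I would prove by induction on the length of $w$ that $F_{FB}(w)(\mathbf t') = \mathbf t'$: the base case is the empty word, and the inductive step is exactly $R_i(\mathbf t')=\mathbf t'$ verified above together with the fact that $T_n$-relation $\sigma_i^2=1$ is respected (which we may take as granted since $F_{FB}$ factors through $FB_n$, but for this lemma we only need that $F_{FB}(w)$ is \emph{some} finite composition of the $R_i^{\pm1}$).

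Finally I would close the argument as in Lemma~\ref{l1}: suppose, for contradiction, that some coordinate, say the $j$-th, of $F_{FB}(w)(\mathbf t)$ were the zero rational function or had a pole at the generic point; then after substituting $t_1=\dots=t_n=-1$ that coordinate would be $0$ or $\infty$. But the coordinate-wise value at $\mathbf t'$ is $-1$, which is neither $0$ nor $\infty$, a contradiction. Hence no coordinate of $F_{FB}(w)(\mathbf t)$ vanishes identically or is identically infinite, which is the assertion.

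The only genuine obstacle is the bookkeeping needed to justify that "no coordinate turns into zero or infinity" really is equivalent to "no coordinate is the zero function or has the whole space in its pole locus," i.e.\ that specializing to $\mathbf t'$ detects these degeneracies; this is immediate once one observes that a nonzero rational function that is regular and nonzero at one point cannot be the zero function and cannot have that point as a pole. No heavy computation is required — the single evaluation $1 + (-1) + (-1)(-1) = 1$ does all the work.
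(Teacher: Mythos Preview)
Your proposal is correct and follows essentially the same argument as the paper: both exhibit the fixed point $\mathbf t' = (-1,\dots,-1)$ of every $R_i$, deduce $F_{FB}(w)(\mathbf t') = \mathbf t'$, and conclude that no coordinate can be identically zero or infinite since it specializes to $-1$. Your write-up is more explicit about the logical step from ``evaluates to $-1$ at $\mathbf t'$'' to ``not identically zero or infinite,'' but the underlying proof is the same.
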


\begin{proof}
	Consider $n$-tuple ${\bf t}' = (-1, -1, \ldots, -1)$. It is easy to see that $R_i^{\pm 1} ({\bf t}') = {\bf t}'$ for each $i$. Hence $F_{FB}(w) ({\bf t}') = {\bf t}'$. Hence, in the image of $F_{FB}(w) ({\bf t})$ no coordinate can turn into zero or infinity, because for  $t_i = -1$, $i=1, \ldots, n$, all coordinates of the image will be equal to $-1$.
\end{proof}	

\begin{theorem} \label{th3}
	Correspondence  $F_{FB} : FB_n \to \Omega_n$ is a homomorphism for any $n \geq 2$.  
\end{theorem}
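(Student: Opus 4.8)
The plan is to verify directly that the defining relations of $FB_n$ are respected by the assignment $\sigma_i \mapsto R_i$. By the presentation of $FB_n$, it suffices to check three families of identities among the operators $R_i$ acting on a vector ${\bf t} = (t_1,\dots,t_n)$ of algebraically independent variables: first, $R_i^2 = \mathrm{Id}$ for $i = 1,\dots,n-1$ (the flatness relation~(\ref{relflat})); second, $R_i R_j = R_j R_i$ for $|i-j| \geq 2$; and third, the braid relation $R_i R_{i+1} R_i = R_{i+1} R_i R_{i+1}$ for $i = 1,\dots,n-2$. Because each $R_i$ only touches the coordinates $t_i, t_{i+1}$ and leaves all others fixed, I would first reduce to the smallest cases: the commuting relation reduces to disjoint supports and hence holds trivially, while the other two reduce to $n=2$ and $n=3$ respectively, with ${\bf t} = (t_1,t_2)$ and ${\bf t} = (t_1,t_2,t_3)$.

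The involution relation $R_i^2 = \mathrm{Id}$ is already recorded in the discussion preceding the theorem: setting ${\bf z} = (z_1, \tfrac{1}{z_1}, z_3, \tfrac{1}{z_3})$ one has $S^2({\bf z}) = {\bf z}$, and the operator $R_i$ is precisely the restriction of $S_i$ to this ``anti-diagonal'' slice, re-expressed in the two free coordinates $t_i = z_{2i-1}$, $t_{i+1} = z_{2i+1}$. Alternatively I would just substitute: writing $\zeta_i = -\tfrac{t_i t_{i+1}}{1+t_{i+1}+t_i t_{i+1}}$ and $\zeta_{i+1} = -(1+t_{i+1}+t_i t_{i+1})$, a short computation shows $1 + \zeta_{i+1} + \zeta_i \zeta_{i+1} = \tfrac{t_{i+1}}{1+t_{i+1}+t_i t_{i+1}} \cdot(\text{something that simplifies})$, and that applying $R_i$ a second time returns $(t_i, t_{i+1})$. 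The far-commutation relation needs no computation at all, since for $|i-j|\ge 2$ the index sets $\{i,i+1\}$ and $\{j,j+1\}$ are disjoint, so $R_i$ and $R_j$ act on disjoint blocks of coordinates and therefore commute.

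The main obstacle is the braid relation $R_1 R_2 R_1 = R_2 R_1 R_2$ on ${\bf t} = (t_1,t_2,t_3)$: this is the one genuinely nontrivial computation, a composition of three rational maps in three variables on each side, and the two resulting triples of rational functions must be shown to agree identically in $\mathbb{F}_3$. I would carry this out by explicit substitution, mirroring the style of the verification of identity~(1) in the proof of Theorem~\ref{th1}; indeed, the cleanest route is to observe that $R_i$ is the restriction of the operator $S_i$ of Theorem~\ref{th1} to the invariant locus $z_{2k} = 1/z_{2k-1}$ (which the lemma preceding this theorem implicitly exploits), so the already-established identity $S_1 S_2 S_1 = S_2 S_1 S_2$ restricts to give $R_1 R_2 R_1 = R_2 R_1 R_2$ once one checks that this locus is preserved by each $S_i$ — which is exactly the content of the displayed computation $S({\bf z}) = (\zeta_1, \tfrac{1}{\zeta_1}, \zeta_3, \tfrac{1}{\zeta_3})$. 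Finally, Lemma~\ref{l2} guarantees that every $R_i$ and every composition $F_{FB}(w)$ is a well-defined birational self-map of the relevant function field (no coordinate degenerates to $0$ or $\infty$), so $\Omega_n$ is genuinely a group of such maps and $F_{FB}$ lands in it; combining this with the three verified relation families shows $F_{FB}$ is a homomorphism.
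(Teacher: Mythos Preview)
Your proposal is correct. The primary plan --- reduce to $i=1$, verify $R_1^2=\mathrm{Id}$ and $R_1 R_2 R_1 = R_2 R_1 R_2$ by direct substitution, and dismiss far-commutation by disjointness of supports --- is exactly what the paper does; it computes both sides of the braid relation explicitly on $(t_1,t_2,t_3)$ and finds the common value $\bigl(\tfrac{t_1 t_2 t_3}{1+t_3-t_1 t_2 t_3},\ \tfrac{1+t_3-t_1 t_2 t_3}{-1+t_2 t_3+t_1 t_2 t_3},\ -1+t_2 t_3+t_1 t_2 t_3\bigr)$.

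Your alternative route for the braid relation --- restrict the already-established identity $S_1 S_2 S_1 = S_2 S_1 S_2$ from Theorem~\ref{th1} to the $S$-invariant locus $\{z_{2k}=1/z_{2k-1}\}$, on which $S_i$ acts as $R_i$ in the coordinates $t_k = z_{2k-1}$ --- is a genuine shortcut the paper does not take. The paper records the invariance of this locus (the displayed $\zeta$-computation) only as motivation and then recomputes the braid relation from scratch in the $t$-variables; your observation that invariance alone transfers the relation saves that entire second computation, at the modest cost of relying on Theorem~\ref{th1}.
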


\begin{proof}
Let us check that for the operators $R_i$, $i=1, \ldots, n-1$, act on ${\bf t}$ in such a way that the following identities hold.
	\begin{itemize}
		\item[(1)] $R_i^2 = 1$, where $i = 1, 2, \ldots, n-2$.
		\item[(2)] $R_i R_{i+1} R_i = R_{i+1} R_i R_{i+1}$, where $i = 1, 2, \ldots, n-2$.
		\item[(3)] $R_i R_j  = R_j R_i$, where $| i - j | \geq 2$.
	\end{itemize} 
	
	We present a proof for the case of $i=1$, which also works for an arbitrary $i= 1, \ldots, n-1$. Consider ${\bf t} = (t_1, t_2, t_3)$. The relation (1) is easily verified. Indeed:
	$$
	R_1^2 ({\bf t}) = R_1^2(t_1, t_2, t_3) = R_1 (-\frac{t_{1} t_{2}}{1 + t_{1} + t_1 t_{2}}, -(1 + t_{2} + t_1 t_{2}), t_3) = ((t_1, t_2, t_3))
	$$
	
	Let us now prove identity (2). Its left-hand side is
	\small{$$
		\begin{gathered}
		R_1 R_2 R_1 ({\bf t}) = R_1 R_2 R_1(t_1, t_2, t_3)
		=R_1 R_2 \left( -\frac{t_{1} t_{2}}{1 + t_{1} + t_1 t_{2}}, -(1 + t_{2} + t_1 t_{2}), t_3 \right) \\
		= R_1 \left( -\frac{t_{1} t_{2}}{1 + t_{1} + t_1 t_{2}}, -\frac{1 + t_2 t_3 + t_1 t_2 t_3}{-1 + t_2 t_3 + t_1 t_2 t_3}, -1 + t_2 t_3 + t_1 t_2 t_3  \right) \\ 
		= \left(\frac{t_{1} t_{2} t_3}{1 + t_{3} - t_1 t_{2} t_3}, \frac{1 + t_3 - t_1 t_2 t_3}{-1 + t_2 t_3 + t_1 t_2 t_3}, -1 + t_2 t_3 + t_1 t_2 t_3  \right).
		\end{gathered}
		$$
	}
The rifght-hand side is
	\small{$$
	\begin{gathered}
	R_2 R_1 R_2 ({\bf t}) = R_2 R_1 R_2(t_1, t_2, t_3)
	=R_2 R_1 \left( -\frac{t_1, t_{2} t_{3}}{1 + t_{2} + t_2 t_{3}}, -(1 + t_{3} + t_2 t_{3}) \right) \\
	= R_2 \left(\frac{t_{1} t_{2} t_3}{1 + t_{3} - t_1 t_{2} t_3}, -\frac{1 + t_3 - t_1 t_2 t_3}{1 + t_3 + t_2 t_3}, -(1 + t_{3} + t_2 t_{3}) \right) \\ 
	= \left(\frac{t_{1} t_{2} t_3}{1 + t_{3} - t_1 t_{2} t_3}, \frac{1 + t_3 - t_1 t_2 t_3}{-1 + t_2 t_3 + t_1 t_2 t_3}, -1 + t_2 t_3 + t_1 t_2 t_3  \right).
	\end{gathered}
	$$
}
Thus, identity (2) holds.
The fulfillment of identity (3) is obvious.
\end{proof}
	
\begin{example}
{\rm
Consider $w_4=  (\sigma_1 \sigma_2)^2  \in FB_3$. The operator  $F_{FB}(w_4)$ acts on $ (1, 2, 2)$ in the following way;  
$$
F_{FB}(w_4) (1, 2, 2)= \left(-\frac{2}{5}, -\frac{10}{7}, 7 \right) \neq (1, 2, 2).
$$ 
Therefore, the homomorphism $F_{FB}$ distinguishes $w_4$ from a trivial braid. 
	}
\end{example}	
	


\section{Flat virtual braid groups}

Consider a vector of algebraically independent variables ${\bf t} = (t_1, t_2, \ldots, t_n)$.  In addition to the operators $R_i$, $i = 1, \ldots, n-1$ introduced in the previous section, we define the operators $V_i$, $i = 1, \ldots, n-1$, according to the rule:
$$
V_i : \begin{cases}
t_i \to &  t_{i+1}, \\
t_{i+1} \to & t_i.
\end{cases}
$$ 
Let $F_{FVB}$ be a map that match operators $R_i$ and $V_i$ with generators $\sigma_i$ and $\rho_i$, $i=1, \ldots, n-1$, of the virtual flat braid group $T_n$:
$$
F_{FVB} (\sigma_i) = R_i, \qquad F_{FVB}(\rho_i) = V_i.
$$

For $n \geq 2$, denote by $\Delta_n$ the group generated by operators $R_i,V_i$, $i=1, \ldots, n-1$, with composition as a group operation.

\begin{lemma} \label{l3}
	Let $w$ be a word in $FVB_n$. Then for a vector of algebraically independent variables ${\bf t} = (t_1, t_2, \ldots, t_{n})$ in the image of $F_{FVB}(w) ({\bf t})$ no coordinate turns into zero or infinity.
\end{lemma}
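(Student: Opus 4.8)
The plan is to copy, essentially verbatim, the argument used for Lemma~\ref{l1} and Lemma~\ref{l2}. The whole proof hinges on a single fixed point: the $n$-tuple ${\bf t}' = (-1, -1, \ldots, -1)$ is invariant under every generator of $\Delta_n$. For the transposition operators $V_i$ this is trivial, since $V_i$ only swaps the $i$-th and $(i+1)$-th entries, hence $V_i({\bf t}') = {\bf t}'$. For the operators $R_i$ one substitutes $t_i = t_{i+1} = -1$ into the defining rule and computes $-\frac{(-1)(-1)}{1 + (-1) + (-1)(-1)} = -1$ and $-(1 + (-1) + (-1)(-1)) = -1$; the same holds for $R_i^{-1} = R_i$, which is an involution as a rational map (shown in the proof of Theorem~\ref{th3}), so $R_i^{\pm 1}({\bf t}') = {\bf t}'$ as well. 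Likewise $V_i^{-1} = V_i$ fixes ${\bf t}'$.

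First I would deduce from this that $F_{FVB}(w)({\bf t}') = {\bf t}'$ for every word $w$, because $F_{FVB}(w)$ is a finite composition of the maps $R_i^{\pm 1}$ and $V_i^{\pm 1}$, each of which fixes ${\bf t}'$. Then I would write the $k$-th coordinate of $F_{FVB}(w)({\bf t})$ as a reduced fraction $p_k({\bf t})/q_k({\bf t})$ of polynomials in $t_1, \ldots, t_n$. The equality $F_{FVB}(w)({\bf t}') = {\bf t}'$ forces $q_k(-1, \ldots, -1) \neq 0$ and $p_k(-1, \ldots, -1) = -\,q_k(-1, \ldots, -1) \neq 0$, so neither $p_k$ nor $q_k$ is the zero polynomial. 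Since $t_1, \ldots, t_n$ are algebraically independent, it follows that $p_k({\bf t}) \neq 0$ and $q_k({\bf t}) \neq 0$, and hence the $k$-th coordinate of $F_{FVB}(w)({\bf t})$ is neither $0$ nor $\infty$; as $k$ is arbitrary, this is exactly the assertion of the lemma.

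The one point that needs a word of care is that the composition defining $F_{FVB}(w)$ is genuinely well-defined at the generic point, i.e.\ that no intermediate step produces a $0/0$ indeterminacy. This is handled by the same fixed-point observation applied stepwise: after each prefix of the word the tuple still evaluates to ${\bf t}'$ at the special point, so every denominator arising along the way is a nonzero polynomial, and a composition of rational maps whose denominators are not identically zero is defined on a nonempty Zariski-open set, in particular at the generic point. I do not expect any real obstacle here — the statement is a routine adaptation of Lemma~\ref{l1}, the only new ingredient being the trivial fact that the swap operators $V_i$ fix ${\bf t}'$.
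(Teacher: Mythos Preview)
Your argument is correct and follows exactly the same route as the paper: verify that the tuple ${\bf t}'=(-1,\ldots,-1)$ is fixed by every $R_i^{\pm1}$ and $V_i^{\pm1}$, deduce $F_{FVB}(w)({\bf t}')={\bf t}'$, and conclude that no coordinate of $F_{FVB}(w)({\bf t})$ can be identically $0$ or $\infty$. The only difference is that you spell out the rational-function and well-definedness steps more carefully than the paper does.
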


\begin{proof}
		Consider $n$-tuple ${\bf t}' = (-1, -1, \ldots, -1)$. It is easy to see that $R_i^{\pm 1} ({\bf t}') = {\bf t}'$ and  $V_i^{\pm 1} ({\bf t}') = {\bf t}'$ for each $i$. Hence $F_{FVB}(w) ({\bf t}') = {\bf t}'$. Hence, in the image of $F_{FVB}(w) ({\bf t})$ no coordinate can turn into zero or infinity, because for  $t_i = -1$, $i=1, \ldots, n$, all coordinates of the image will be equal to $-1$.
\end{proof}

\begin{theorem} \label{th4}
	Correspondence  $F_{FVB} : FVB_n \to \Delta_n$ is a homomorphism for any $n \geq 2$.  
\end{theorem}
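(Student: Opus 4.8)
The plan is to verify that, under the assignment $\sigma_i \mapsto R_i$ and $\rho_i \mapsto V_i$, the operators $R_i$ and $V_i$, $i = 1, \ldots, n-1$, satisfy every defining relation of $FVB_n$ --- that is, relations (\ref{rel1})--(\ref{rel7}) together with (\ref{relflat}) --- so that $F_{FVB}$ extends to a well-defined group homomorphism onto the group $\Delta_n$ generated by these operators. As in the proof of Theorem~\ref{th1}, it suffices to treat the index $i = 1$, since the operators carrying index $i$ act on the coordinate pair $(t_i, t_{i+1})$ exactly as the operators of index $1$ act on $(t_1, t_2)$.

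Most of the relations require no new work. Relations (\ref{relflat}), (\ref{rel1}), (\ref{rel2}) --- namely $R_1^2 = 1$, $R_1 R_2 R_1 = R_2 R_1 R_2$, and $R_i R_j = R_j R_i$ for $|i-j| \geq 2$ --- are precisely identities (1)--(3) established in the proof of Theorem~\ref{th3}. Each $V_i$ merely transposes the coordinates $t_i$ and $t_{i+1}$, so the $V_i$ realize the standard permutation action of the symmetric group $S_n$ on $\mathbb{C}(t_1, \ldots, t_n)$; hence the Coxeter-type relations (\ref{rel3}), (\ref{rel4}), (\ref{rel5}) hold automatically. The mixed far-commutativity relation (\ref{rel6}), $R_i V_j = V_j R_i$ for $|i-j| \geq 2$, holds because the coordinate blocks $\{t_i, t_{i+1}\}$ and $\{t_j, t_{j+1}\}$ acted on by the two operators are disjoint.

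The only relation that calls for an actual computation is the mixed relation (\ref{rel7}), i.e.\ $V_1 V_2 R_1 = R_2 V_1 V_2$ acting on a triple $(t_1, t_2, t_3)$. On the left, $R_1$ sends $(t_1, t_2, t_3)$ to $\left( -\frac{t_1 t_2}{1 + t_2 + t_1 t_2},\, -(1 + t_2 + t_1 t_2),\, t_3 \right)$, and then $V_1 V_2$ acts as the cyclic shift $(x, y, z) \mapsto (z, x, y)$, producing $\left( t_3,\, -\frac{t_1 t_2}{1 + t_2 + t_1 t_2},\, -(1 + t_2 + t_1 t_2) \right)$. On the right, $V_1 V_2$ first sends $(t_1, t_2, t_3)$ to $(t_3, t_1, t_2)$, and then $R_2$ acts on the last two coordinates, again giving $\left( t_3,\, -\frac{t_1 t_2}{1 + t_2 + t_1 t_2},\, -(1 + t_2 + t_1 t_2) \right)$. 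The two sides coincide, so (\ref{rel7}) holds and $F_{FVB}$ is a homomorphism.

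I do not expect a genuine obstacle here. Unlike in Theorem~\ref{th1}, the virtual generators now act by plain coordinate permutations, so the braid-type relations among the $V_i$ reduce to $S_n$-combinatorics, relation (\ref{rel6}) is trivial by disjointness of supports, the relations among the $R_i$ were already handled in Theorem~\ref{th3}, and only (\ref{rel7}) requires a short, routine rational-function check of the kind carried out above.
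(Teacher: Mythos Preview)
Your proof is correct and follows essentially the same approach as the paper's: verify each defining relation of $FVB_n$ under the assignment $\sigma_i\mapsto R_i$, $\rho_i\mapsto V_i$, reducing to $i=1$ and invoking Theorem~\ref{th3} for the $R_i$-relations. In fact your argument is slightly more complete, since you explicitly dispatch the mixed far-commutativity relation (\ref{rel6}) by disjointness of supports, whereas the paper's enumeration of relations to check omits it; your treatment of the $V_i$-relations via the $S_n$-action is also a bit more conceptual than the paper's explicit computation of $V_1V_2V_1=V_2V_1V_2$.
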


\begin{proof}
	Let us check that the operators $R_i$ and $V_i$, $i=1, \ldots, n-1$, act on ${\bf t}$ in such a way that the following identities hold.
	\begin{itemize}
		\item[(1)] $R_i^2 = 1$, where $i = 1, 2, \ldots, n-2$.
		\item[(2)] $R_i R_{i+1} R_i = R_{i+1} R_i R_{i+1}$, where $i = 1, 2, \ldots, n-2$.
		\item[(3)] $R_i R_j  = R_j R_i$, where $| i - j | \geq 2$.
		\item[(4)] $V_i V_{i+1} V_i = V_{i+1} V_i V_{i+1}$, where $i = 1, 2, \ldots, n-2$.
		\item[(5)] $V_i V_j = V_j V_i$, where $| i - j | \geq 2$.
		\item[(6)] $V_i^2 = 1$, where $i = 1, 2, \ldots, n-1$.
		\item[(7)] $V_i V_{i+1} R_i = R_{i+1} V_i V_{i+1}$, where $i = 1, 2, \ldots, n-2$.
	\end{itemize} 
Identities (1), (2) and (3) are proved in theorem~\ref{th3}. The fulfillment of identities (5) and (6) is obvious. It remains to prove the relations (4) and (7). We present a proof for the case of $i=1$, which also works for an arbitrary $i= 1, \ldots, n-1$. Consider ${\bf t} = (t_1, t_2, t_3)$.  Let us now prove the identity (4). Its left-hand side is
$$
V_1 V_2 V_1 ({\bf t}) = V_1 V_2 V_1 (t_1, t_2, t_3) = V_1 V_2 (t_2, t_1, t_3) = V_1 (t_2, t_3, t_1) = (t_3, t_2, t_1).
$$

The right-hand side is
$$
V_2 V_1 V_2 ({\bf t}) = V_2 V_1 V_2 (t_1, t_2, t_3) = V_2 V_1 (t_1, t_3, t_2) = V_2 (t_3, t_1, t_2) = (t_3, t_2, t_1).
$$
So the identity (4) holds. Let us now prove the identity (7). Its left-hand side is
	\small{$$
	\begin{gathered}
	V_1 V_2 R_1 ({\bf t}) = V_1 V_2 R_1(t_1, t_2, t_3)
	=V_1 V_2 \left( -\frac{t_{1} t_{2}}{1 + t_{1} + t_1 t_{2}}, -(1 + t_{2} + t_1 t_{2}), t_3 \right) \\
	= V_1 \left( -\frac{t_{1} t_{2}}{1 + t_{1} + t_1 t_{2}}, t_3, -(1 + t_{2} + t_1 t_{2}) \right)  
	=  \left( t_3, -\frac{t_{1} t_{2}}{1 + t_{1} + t_1 t_{2}}, -(1 + t_{2} + t_1 t_{2}) \right).
	\end{gathered}
	$$
}

The right-hand side is
	\small{$$
	\begin{gathered}
	R_2 V_1 V_2 ({\bf t}) = R_2 V_1 V_2(t_1, t_2, t_3)
	=R_2 R_1 \left(t_1, t_3, t_2) \right) 
	= R_2 \left(t_3, t_1, t_2) \right) \\ 
	= \left(t_3, -\frac{t_{1} t_{2}}{1 + t_{1} + t_1 t_{2}}, -(1 + t_{2} + t_1 t_{2}) \right).
	\end{gathered}
	$$
}
So the identity (8) holds.
\end{proof}

\begin{example}
{\rm
Consider  $w_5 = \sigma_2 \rho_1 \sigma_1 \rho_2 \in FVB_3$. The operator  $F_{FVB}(w_5)$ acts on $ (1, 2, 2)$ in the following way:  
$$
F_{FVB}(w_5) (1, 2, 2)= \left(-5, \frac{4}{11}, -\frac{11}{5} \right) \neq (1, 2, 2).
$$
Therefore, the homomorphism $F_{FVB}$ distinguishes $w_5$ from a trivial braid. 
	}
\end{example}

\end{document}